\renewcommand{\div}{\operatorname{div}}
\newcommand{\Rr}{{\mathbb{R}}}
\newcommand{\Nn}{{\mathbb{N}}}
\newcommand{\Tt}{{\mathbb{T}}}
\newcommand{\Ff}{{\mathcal{F}}}
\newcommand{\epsi}{\varepsilon}
\def\d{{\rm d}}
\def\dx{{\rm d}x}
\def\dt{{\rm d}t}
\def\leq{\leqslant}
\def\geq{\geqslant}
\numberwithin{equation}{section}
\newtheoremstyle{thmlemcorr}{10pt}{10pt}{\itshape}{}{\bfseries}{.}{10pt}{{\thmname{#1}\thmnumber{
#2}\thmnote{ (#3)}}}
\newtheoremstyle{thmlemcorr*}{10pt}{10pt}{\itshape}{}{\bfseries}{.}\newline{{\thmname{#1}\thmnumber{
\newtheoremstyle{defi}{10pt}{10pt}{\itshape}{}{\bfseries}{.}{10pt}{{\thmname{#1}\thmnumber{
#2}\thmnote{ (#3)}}}
\newtheoremstyle{remexample}{10pt}{10pt}{}{}{\bfseries}{.}{10pt}{{\thmname{#1}\thmnumber{
#2}\thmnote{ (#3)}}}
\newtheoremstyle{ass}{10pt}{10pt}{}{}{\bfseries}{.}{10pt}{{\thmname{#1}\thmnumber{
A#2}\thmnote{ (#3)}}}
\theoremstyle{thmlemcorr}
\newtheorem{theorem}{Theorem}
\numberwithin{theorem}{section}
\theoremstyle{thmlemcorr*}
\newtheorem{theorem*}{Theorem}
\newtheorem{lemma*}[theorem]{Lemma}
\newtheorem{corollary*}[theorem]{Corollary}
\newtheorem{proposition*}[theorem]{Proposition}
\newtheorem{problem*}[theorem]{Problem}
\newtheorem{conjecture*}[theorem]{Conjecture}
\theoremstyle{defi}
\newtheorem{definition}[theorem]{Definition}
\newtheorem{hyp}{Assumption}
\newtheorem{problem}{Problem}
\theoremstyle{remexample}
\newtheorem{remark}[theorem]{Remark}
\newtheorem{example}[theorem]{Example}
\newtheorem{teo}[theorem]{Theorem}
\newtheorem{lem}[theorem]{Lemma}
\newtheorem{pro}[theorem]{Proposition}
\theoremstyle{ass}
\begin{document}

\title[The Planning Problem With Potential]{Uniform Estimates For The Planning Problem \\With Potential}

\author{Tigran Bakaryan}
\address[T. Bakaryan]{
        King Abdullah University of Science and Technology (KAUST),
CEMSE Division, Thuwal 23955-6900, Saudi Arabia.}
\email{tigran.bakaryan@kaust.edu.sa}
\author{Rita Ferreira}
\address[R. Ferreira]{
        King Abdullah University of Science and Technology (KAUST),
CEMSE Division, Thuwal 23955-6900, Saudi Arabia.}
\email{rita.ferreira@kaust.edu.sa}
\author{Diogo Gomes}
\address[D. Gomes]{
        King Abdullah University of Science and Technology (KAUST),
CEMSE Division, Thuwal 23955-6900, Saudi Arabia.}
\email{diogo.gomes@kaust.edu.sa}

\keywords{Time dependent mean-field games; planning problem;  a priori estimates}
\subjclass[2010]{
                91A13, 
        35Q91, 
                35F50,          
                26B25. 
                }

\thanks{T. Bakaryan, R. Ferreira, and D. Gomes were partially supported
by baseline and start-up funds from King Abdullah University
of Science and Technology (KAUST) OSR-CRG2017-3452. 
}

\begin{abstract}
        In this paper, we  study  a priori estimates
for a first-order
mean-field planning problem  with a potential.  In the theory
of mean-field games (MFGs), a priori estimates play a crucial
role to prove the existence of classical solutions. In particular,
uniform bounds for the density of
players' distribution and its inverse are of utmost
importance.  Here, we investigate a priori  bounds for those quantities for
a planning problem with a non-vanishing potential. The presence
of a potential
raises non-trivial difficulties, which
we overcome  by  exploring a
        displacement-convexity property for the  mean-field planning
problem with a potential together with Moser's
 iteration
method. We show that if the potential satisfies a certain smallness
condition, then a displacement-convexity property holds. This
property enables $L^q$ bounds for the density. In
the
one-dimensional case,  the displacement-convexity property also
gives $L^q$ bounds for the inverse of the density.   Finally,
using  these $L^q$ estimates  and  Moser's iteration
method,  we obtain   $L^\infty$ estimates for the density
of
the distribution of the players and its inverse. \end{abstract}

\maketitle

\section{Introduction}
The theory of mean-field games (MFGs) was proposed by J.-M. Lasry and P.-L. Lions (see \cite{ll1,ll2,ll3}) and, independently, by M.
Huang, R. Malham{\'e}, and P. Caines (see \cite{huang2006large}). These games  describe the interaction between identical rational agents, where 
each agent minimizes the same value function. A standard MFG is determined by a system of PDEs, a Hamilton--Jacobi and a Fokker--Planck equation:
\begin{equation}
\label{MFGs}
\begin{cases} 
-u_t-\varepsilon \Delta u+H(x,Du)=f(x,m) &  \\ m_t-\varepsilon \Delta m-\div (mH_p(x,Du))=0 & 
\end{cases} \text{in}\,\ (0,T)\times \Tt^d,  
\end{equation}
with the initial and terminal conditions 
\begin{equation}
\label{boundaryMFGs}
\begin{cases} 
m(0,x)=m_0(x) &  \\ u(T,x)=u_T(x)& 
\end{cases} \text{in}\,\ \Tt^d.
\end{equation}
Here, $u$ represents the value function of a typical agent and $m$ the distribution of the agents. Under mild condition on the problem data, the existence of weak solutions of \eqref{MFGs}--\eqref{boundaryMFGs} is addressed in \cite{cgbt} and in \cite{FeGoTa20} using monotonicity methods. 
 Regarding classical solutions, it is proved in \cite{Gomes2015b,GPM2,GPM3,Gomes2016c} that  \eqref{MFGs}--\eqref{boundaryMFGs} has a unique classical solution under suitable conditions on the problem data. 
A priori estimates play a crucial role  in the proof of the existence of classical solutions. In particular, the uniform boundedness
of the functions $m$ and $m^{-1}$, 
\begin{equation}\label{bounded_m_m-1}
\Vert m\Vert_{L^\infty}<\infty\enspace \text{ and }\enspace \Vert m^{-1}\Vert_{L^\infty}<\infty,
\end{equation}
is crucial to obtain classical solutions.

 In his lectures in  Coll\`ege de France  \cite{cursolionsplanning},  P.-L. Lions introduced {\em  mean-field planning problems}. This problem amounts to solving \eqref{MFGs}  with initial and terminal conditions only on the density, $m$; that is, 
 \begin{equation}
\label{boundaryP}
\begin{cases} 
m(0,x)=m_0(x) &  \\ m(T,x)=m_T(x)& 
\end{cases} \text{in}\,\ \Tt^d.
 \end{equation}
In those lectures,  P.-L. Lions proved the existence and uniqueness of classical solutions for the planning problem with a quadratic Hamiltonian, $H(x, p)=\frac{|p|^2}{2}$,   within both the second-order case ($\varepsilon >0$ in \eqref{MFGs})  and  the first-order
case ($\varepsilon =0$ in \eqref{MFGs}), and with $f = f(m)$ an increasing function  (see \cite{cursolionsplanning}). In \cite{porretta,porretta2}, A. Porretta proved the existence and uniqueness of weak solutions for the second-order case with a more general Hamiltonian. For the first-order case with $f = f(m)$ an increasing function, D. Gomes and T. Seneci explored in \cite{gomes2018displacement} the displacement convexity property to obtain $L^p$ and $L^\infty$ estimates.
Recently, the existence and uniqueness of weak solutions for the first-order case with a wide range of Hamiltonian has been addressed in \cite{OrPoSa2018,Tono2019}. 

We consider the case where the coupling function $f=f(x,m)$ is separated: $f(x,m)=g(m)-V(x)$. The potential, $V$, describes the spatial preferences of each agent.  In our setting, the potential
can also depend  on time, $V=V(t,x)$. More precisely, we investigate the following first-order mean-field planning problem with a time-dependent potential and
a quadratic Hamiltonian.
\begin{problem}\label{planningP} Suppose that $V\in C^\infty([0,T]\times\Tt^d)$ and that $g\in C^\infty(\Rr_0^+)$ is a non--decreasing function. Let $m_0$, $m_T\in C^\infty(\Tt^d)$ be  probability densities. Find  $(u,m)\in C^\infty([0,T]\times \Tt^d)\times C^\infty([0,T]\times \Tt^d)$ satisfying $m\geq 0$ and  
\begin{equation}
\label{planningPs}
\begin{cases} 
-u_t+\frac{|Du|^2}{2}+V(t,x)=g(m) & \text{in } (0,T)\times \Tt^d   \\ m_t-\div (mDu )=0  &\text{in } (0,T)\times \Tt^d  \\m(0,x)=m_0(x),\enspace m(T,x)=m_T(x) &\text{in }\Tt^d .
\end{cases}
\end{equation}
\end{problem}

In \cite{LavSantambrogio2017}, the authors use a flow interchange technique
to obtain $L^\infty$ estimates on the density of the solution of mean-field games without a potential.
This flow interchange technique is a discrete analog of the displacement 
convexity. In that same reference, a key technical tool is the Moser method
to iterate $L^p$ estimates and obtain $L^\infty$ estimates. This method is also used
here, although in a somewhat different manner. In particular,  
our focus is  on proving a priori bounds of the type \eqref{bounded_m_m-1}.
Such bounds were established in \cite{gomes2018displacement}  for solutions of the first-order mean-field planning problem without a potential, Problem \ref{planningP} with $V\equiv0$. In this manuscript, we concentrate on exploring similar a priori estimates for the first-order mean-field planning problem with a potential ($V\not\equiv0$). The presence of a potential raises technical difficulties in establishing a priori estimates, which we  overcome through new techniques that combine displacement convexity with Moser's iteration
method. 

Next, we state  our main results. We first outline our assumptions on the data of the
Problem~\ref{planningP}.
The first one is a smallness condition on the potential, $V$.
As we show at the end of   
 Section~\ref{sec3}, we cannot, in general, expect
 bounds  of the type \eqref{bounded_m_m-1} to hold without this smallness
condition.
\begin{hyp}\label{assumtionOf_V} There exists $p>0$ such that
the potential, $V:[0,T]\times \Tt^d \to \Rr$, satisfies 
        \begin{equation*}
        \Vert\Delta V\Vert_{L^\infty([0,T]\times\Tt^d)}<\frac{2}{T^2}\frac{1}{p}.
        \end{equation*}
\end{hyp}
 Further, we impose a positive lower bound on the planning problem initial-terminal data, $m_0$ and
$m_T$. This lower bound, together with the smoothness of \(m_0\)
and \(m_T\), guarantees that any  power of \(m_0^{-1}\) and \(m_T^{-1}\)  is an integrable function on \(\Tt^d\).
As it will become clear within our proofs, the value of those
integrals are key in our estimates. \begin{hyp}\label{assumtionOnBounds} There exists a positive constant,
$k_0$, such that the initial-terminal functions, $m_0$ and
$m_T$, satisfy
        \begin{equation*}
         m_0(x),\,m_T(x)\geq k_0>0,\quad x\in \Tt^d.
        \end{equation*} 
\end{hyp}  
\begin{teo}\label{mIsLq}  Let $(u,m)$ solve Problem \ref{planningP},
and suppose that Assumption \ref{assumtionOf_V}
 holds for some $p>0$. Then, there exists a positive  constant, $C$, depending only on the problem data and on $p$, such that 
        \begin{equation}\label{mIsLqeq}
        \max\limits_{t\in[0,T]}\Vert m\Vert_{L^{p+1}(\Tt^d)}\leq C.
        \end{equation}

Moreover, in the \(d=1\) case,  if Assumptions \ref{assumtionOf_V}
and \ref{assumtionOnBounds}   hold for some $p\geq2$, then there exists a positive  constant, $C$,  depending only on the problem data and on $p$, such that 
        \begin{equation}\label{mIsLqe-q}
        \max\limits_{t\in[0,T]}\Vert m^{-1}\Vert_{L^{p-1}(\Tt)}\leq C.
        \end{equation}
\end{teo}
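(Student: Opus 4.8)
The plan is to exploit displacement convexity in its Eulerian form. For the first assertion I would study
\[
\phi(t):=\int_{\Tt^d} m(t,x)^{q}\,\dx=\Vert m(t,\cdot)\Vert_{L^{q}(\Tt^d)}^{q},\qquad q:=p+1,
\]
and differentiate it twice in $t$. Using the continuity equation $m_t=\div(mDu)$ and integrating by parts on $\Tt^d$ gives $\phi'(t)=(q-1)\int_{\Tt^d}m^{q}\Delta u\,\dx$. Differentiating once more, substituting $u_t=\tfrac12|Du|^2+V-g(m)$ from the Hamilton--Jacobi equation, using Bochner's identity $\tfrac12\Delta|Du|^2=|D^2u|^2+Du\cdot D\Delta u$, integrating by parts repeatedly and observing that the two terms containing $Du\cdot D\Delta u$ cancel, I expect the displacement-convexity identity
\[
\phi''(t)=(q-1)\left[(q-1)\int_{\Tt^d} m^{q}(\Delta u)^2\,\dx+\int_{\Tt^d} m^{q}|D^2u|^2\,\dx+\int_{\Tt^d} m^{q}\Delta V\,\dx+q\int_{\Tt^d} m^{q-1}g'(m)|Dm|^2\,\dx\right].
\]
For $q=p+1\geq1$ every term inside the bracket except the one with $\Delta V$ is nonnegative (using $m\geq0$, $q\geq1$, and that $g$ is non-decreasing, so $g'\geq0$), so discarding them and using $|\Delta V|\leq\Vert\Delta V\Vert_{L^\infty}$ leaves $\phi''(t)\geq-p\Vert\Delta V\Vert_{L^\infty}\phi(t)$.

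The second ingredient is an elementary scalar comparison: if $\psi\geq0$ on $[0,T]$ satisfies $\psi''\geq-c\psi$ with $cT^2<2$, then $\max_{[0,T]}\psi\leq\bigl(1-\tfrac{cT^2}{2}\bigr)^{-1}\max\{\psi(0),\psi(T)\}$. Indeed, if the maximum $M$ is attained at an interior point $t_\ast$, then $\psi'(t_\ast)=0$ and integrating $\psi''\geq-cM$ twice from $t_\ast$ to either endpoint yields $\psi(0),\psi(T)\geq M\bigl(1-\tfrac{cT^2}{2}\bigr)$; the endpoint case is trivial. Applying this with $\psi=\phi$ and $c=p\Vert\Delta V\Vert_{L^\infty}$ --- the requirement $cT^2<2$ being exactly Assumption~\ref{assumtionOf_V} --- and noting that $\phi(0)=\int_{\Tt^d}m_0^{p+1}\,\dx$ and $\phi(T)=\int_{\Tt^d}m_T^{p+1}\,\dx$ are finite and determined by the data, \eqref{mIsLqeq} follows.

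For the one-dimensional inverse bound I would repeat the computation with $q:=1-p\leq-1$, so that $\phi(t)=\int_{\Tt}m^{1-p}\,\dx=\Vert m^{-1}(t,\cdot)\Vert_{L^{p-1}(\Tt)}^{p-1}$; this is legitimate because $m>0$ on $[0,T]\times\Tt$, which follows by integrating $\tfrac{d}{dt}\log m(t,X(t))=u_{xx}(t,X(t))$ along the characteristics $\dot X=-u_x$ and using the lower bound $m_0\geq k_0$ from Assumption~\ref{assumtionOnBounds}. The identity above still holds, but now $q-1=-p<0$; the key point is that in dimension one $|D^2u|^2=(u_{xx})^2=(\Delta u)^2$, so the first two bracketed terms merge into $q\int_\Tt m^{q}(u_{xx})^2\,\dx$ and, after the overall factor $q-1=-p$, the relevant contributions become $p(p-1)\int_\Tt m^{q}(u_{xx})^2\,\dx\geq0$ and $p(p-1)\int_\Tt m^{q-1}g'(m)(m_x)^2\,\dx\geq0$ (here $p\geq2$ is used). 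Discarding these nonnegative terms once more yields $\phi''\geq-p\Vert\Delta V\Vert_{L^\infty}\phi$, and the same comparison, together with $\phi(0),\phi(T)\leq k_0^{-(p-1)}$ from Assumption~\ref{assumtionOnBounds}, gives \eqref{mIsLqe-q}.

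I expect the main obstacle to be the displacement-convexity computation itself: it is a fairly long chain of integrations by parts and applications of Bochner's identity in which the bookkeeping of signs must be done carefully. In particular, it is precisely the sign of the Hessian term $\int m^{q}|D^2u|^2\,\dx$ --- favourable for the direct estimate in every dimension, unfavourable once $q-1<0$, and salvaged only because $|D^2u|^2=(\Delta u)^2$ when $d=1$ --- that forces the restrictions $d=1$ and $p\geq2$ in the second part. A secondary technical point is that in the first part $m_0$, hence $m$, may vanish, so one should verify that $\phi\in C^2([0,T])$ and that the integrations by parts are legitimate; this causes no difficulty because $q\geq1$. The remaining steps, namely the scalar comparison lemma and the identification of the constants with the problem data, are routine.
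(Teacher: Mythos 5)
Your proposal is correct and follows essentially the same displacement-convexity route as the paper: you compute the second time derivative of $\int_{\Tt^d} m^q\,\dx$, arrive at the same identity (the paper phrases it via $P(z)=zU'(z)-U(z)$ with $U(z)=z^q$, but after substitution the bracketed terms coincide with yours), discard the same nonnegative terms using the one-dimensional identity $|D^2u|^2=(u_{xx})^2$ for the inverse bound, and close with a scalar comparison lemma of the type $\psi''\geq -c\psi$ with $cT^2<2$. The only differences are cosmetic: your proof of the comparison lemma integrates twice from an interior maximum, whereas the paper (Lemma~\ref{UniBoundin}) adds a quadratic perturbation $kt^2$ to obtain a convex auxiliary function; and you make explicit the positivity of $m$ via characteristics for the $m^{-1}$ estimate, a point the paper treats implicitly.
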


\begin{remark}[On the \(p=0\) case]\label{rmk:p=0}
We observe that the estimate in \eqref{mIsLqeq} holds for \(p=0\) and for
an arbitrary smooth potential, \(V\in C^\infty([0,T]\times\Tt^d)\). In fact, by the mass-conservation
property of the Fokker--Planck
equation together with the initial condition, we have \(\int_{\Tt^d}
m\,\dx =1\) for all \(t\in[0,T]\). \end{remark}

In Section \ref{sec2},
we explore a  displacement-convexity  property of Problem~\ref{planningP}. We refer the
reader to  \cite{gomes2018displacement}
(and the references therein)  for further  insights on the
concept of displacement convexity for MFGs. Relying on this property,
we prove Theorem~\ref{mIsLq}.
\smallskip

Next, we address the  particular case of  Problem~\ref{planningP} corresponding to the coupling \(g(m) = m^\alpha\) for some \(\alpha>0\).
 For such  coupling functions, which feature many MFGs models, 
 we improve the   estimates in  Theorem~\ref{mIsLq}, as stated
below.

\begin{teo}\label{mIsLq-q+q} Let $(u,m)$
solve Problem~\ref{planningP}
with  \(g(m) = m^\alpha\) for some \(\alpha>0\). Suppose that Assumption \ref{assumtionOf_V}
 holds for some $p>0$. Then, there exists a positive  constant, $C$, depending only on the problem data and on \(p\), such that 
        \begin{equation*}
        \max\limits_{t\in[0,T]}\Vert m\Vert_{L^{\infty}(\Tt^d)} \leq C.
        \end{equation*}
        
Moreover, in the \(d=1\) case,  if Assumptions \ref{assumtionOf_V}
and \ref{assumtionOnBounds}   hold for some \(p>0\) with $p\geq2$ and $p>\alpha+1$, then there exists a positive  constant, $C$,  depending only on the problem data and on $p$, such that 
         \begin{equation*}
        \max\limits_{t\in[0,T]}  \Vert m^{-1}\Vert_{L^{\infty}(\Tt)} \leq C.
        \end{equation*}
\end{teo}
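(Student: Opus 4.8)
The plan is to combine the displacement-convexity computation of Section~\ref{sec2} with Moser's iteration, exploiting the extra coercivity produced by the power coupling $g(m)=m^\alpha$. First I would specialize the second-derivative identity for $\Phi_q(t):=\int_{\Tt^d} m^q\,\dx$ obtained in Section~\ref{sec2} to $g(m)=m^\alpha$, getting, for $q>1$,
\[
\Phi_q''(t)=(q-1)\int_{\Tt^d} m^q\big(|D^2u|^2+(q-1)(\Delta u)^2+\Delta V\big)\,\dx+\frac{4\alpha q(q-1)}{(q+\alpha)^2}\int_{\Tt^d}\big|D(m^{(q+\alpha)/2})\big|^2\,\dx,
\]
and in particular $\Phi_q''\ge -(q-1)\Vert\Delta V\Vert_{L^\infty}\Phi_q+\frac{4\alpha q(q-1)}{(q+\alpha)^2}\int_{\Tt^d}|D(m^{(q+\alpha)/2})|^2\,\dx$. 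The point is that the power coupling contributes a genuinely coercive gradient term, while the only term with a bad sign is the one carrying $\Delta V$.

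Next, let $M_q:=\max_{[0,T]}\Phi_q$ and let $t_q\in[0,T]$ be a maximizer. If $t_q\in\{0,T\}$ then $M_q\le\max(\Vert m_0\Vert_{L^\infty},\Vert m_T\Vert_{L^\infty})^q$ and this exponent is settled; otherwise $\Phi_q'(t_q)=0$ and $\Phi_q''(t_q)\le0$, which together with the inequality above yields
\[
\int_{\Tt^d}\big|D(m^{(q+\alpha)/2})(t_q,\cdot)\big|^2\,\dx\le\frac{(q+\alpha)^2}{4\alpha q}\,\Vert\Delta V\Vert_{L^\infty}\,M_q,\qquad \Vert m(t_q,\cdot)\Vert_{L^q}^q=M_q.
\]
At the time $t_q$ I would then feed $m^{(q+\alpha)/2}(t_q,\cdot)$ into a Gagliardo--Nirenberg--Sobolev inequality on $\Tt^d$ and use the mass constraint $\int_{\Tt^d}m\,\dx=1$ together with interpolation of intermediate $L^s$-norms (absorbing lower-order contributions by Young's inequality, using $M_q\ge1$) to produce a recursive inequality of the shape $M_{\sigma(q+\alpha)}\le (Cq)^{\beta}M_q^{\gamma}$, with a fixed dilation factor $\sigma=\tfrac{d}{d-2}>1$ for $d\ge3$ (for $d=1,2$ any factor, and $L^\infty$ already in one step) and explicit $\beta,\gamma$. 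Iterating along the geometric sequence $q_0=p+1$, $q_{n+1}=\sigma(q_n+\alpha)\to\infty$, starting from $M_{q_0}<\infty$ supplied by Theorem~\ref{mIsLq} and using at each step the finiteness of $M_{q_n}$ coming from the previous one, the usual Moser bookkeeping shows that $\prod_n (Cq_n)^{\beta/q_n}$ converges; since $M_q^{1/q}\uparrow\Vert m\Vert_{L^\infty}$, this gives the first estimate.

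For the bound on $m^{-1}$ in $d=1$ I would repeat the scheme with $\Psi_r(t):=\int_{\Tt}m^{-r}\,\dx$. In dimension one $|D^2u|^2=(\Delta u)^2=u_{xx}^2$, so the $u$-contribution to $\Psi_r''$ is coercive (this is where $d=1$ is genuinely used: for $d\ge2$ the term $-(r+1)\int m^{-r}|D^2u|^2$ has no useful lower bound in terms of $(\Delta u)^2$). Moreover, since $g$ is non-decreasing, the coupling term appears with the favourable sign and equals $\tfrac{4\alpha r(r+1)}{(r-\alpha)^2}\int_{\Tt}|\partial_x(m^{-(r-\alpha)/2})|^2\,\dx$, i.e. the Dirichlet energy of a \emph{negative} power of $m$ — which is what makes it useful for controlling $m^{-1}$, and only when $r>\alpha$. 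Since the iteration must start from the exponent $r_0=p-1$ given by Theorem~\ref{mIsLq} (valid because $p\ge2$), this forces $p>\alpha+1$; the $L^\infty$ bound for $m$ just proven is also invoked to dispose of factors that are positive powers of $m$. One then runs the same maximum-time / Sobolev / interpolation argument, concluding via the embedding $H^1(\Tt)\hookrightarrow L^\infty(\Tt)$.

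The main obstacle is the construction of the recursion in such a way that its exponent-dependent constants remain summable after taking $q$-th roots. Because the coercivity coefficient $\tfrac{4\alpha q(q-1)}{(q+\alpha)^2}$ is matched in the estimate by a factor growing like $q$, a plain Poincaré inequality is insufficient — one must exploit the genuine Sobolev gain — and one has to cope with the fact that the gradient bound is available only at the maximizer of $\Phi_q$, not at that of the larger exponent; this is why advancing to the next exponent requires interpolating back to an already controlled norm and absorbing the remainder with Young's inequality. In the $m^{-1}$ part, the additional delicate point is the careful tracking of the signs of the Hessian and coupling terms, which is precisely what pins down the restriction to $d=1$ and the hypothesis $p>\alpha+1$.
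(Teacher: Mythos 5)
Your proposal captures the paper's twin ingredients---the displacement-convexity second-derivative identity and Moser's iteration driven by the coercive gradient term produced by the power coupling---and correctly identifies the role of the $d=1$ restriction (one needs $|D^2u|^2=(\Delta u)^2$ when $P(m)<0$) as well as the reason for $p>\alpha+1$, namely that the iteration for $m^{-1}$ must start from an exponent $r_0=p-1>\alpha$. However, the device you use to turn the second-order differential inequality into a Moser recursion is different from the paper's, and as written it has a gap.

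You evaluate the inequality at a maximizer $t_q$ of $\Phi_q(t):=\int_{\Tt^d}m^q\,\dx$, obtain $\int|D(m^{(q+\alpha)/2})(t_q,\cdot)|^2\,\dx\lesssim M_q$ from $\Phi_q''(t_q)\le 0$, and feed this into Sobolev and interpolation to claim $M_{\sigma(q+\alpha)}\le (Cq)^\beta M_q^\gamma$. But the gradient bound is only available at the single time $t_q$: at best it controls $\Phi_{\sigma(q+\alpha)}(t_q)$, not $M_{\sigma(q+\alpha)}=\max_t\Phi_{\sigma(q+\alpha)}(t)$, because the maximizer of the larger exponent may sit at a completely different time. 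You flag this difficulty yourself, but ``interpolating back to an already controlled norm and absorbing with Young'' does not close it, since absorption performed at $t_q$ still only yields a conclusion at $t_q$. A correct version of the maximum-time device would instead work at the maximizer $t_{q'}$ of the \emph{larger} exponent $\Phi_{q'}$, get a gradient bound for $m^{(q'+\alpha)/2}$ there, and interpolate $\|m(t_{q'})\|_{L^{q'}}$ between $\|m(t_{q'})\|_{L^1}=1$ and $\|m(t_{q'})\|_{L^{\sigma(q'+\alpha)}}$ so that the previously controlled quantity enters as $\|m(t_{q'})\|_{L^q}^q\le M_q$. The paper sidesteps all of this: it proves, uniformly for every $t\in[0,T]$, a lower bound of the type $\frac{\d^2}{\dt^2}\Phi_q\ge c_1 f(t)-c_2 M_r\big(M_r^{(q-r)/r}+f(t)^\gamma\big)$ by combining the generalized Poincar\'e inequality, Morrey/Sobolev, and H\"older interpolation, absorbs $f(t)^\gamma$ via Young to get $\frac{\d^2}{\dt^2}\Phi_q\ge -K_{q,r}$ for all $t$, and then uses convexity of $h(t)=\Phi_q(t)+\frac12 K_{q,r}t^2$ together with $h(t)\le h(0)+h(T)$ to deduce $M_q\le\big(qC^{2\gamma+5}C_\ell^{2\gamma}\big)^{1/(1-\gamma)}M_r^{1/(1-\gamma)}$. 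This convexity barrier is what makes the recursion rigorous; the iteration is then run along $q_n=(3/2)^n$ with a $q$-Pochhammer bookkeeping rather than the dilation $q\mapsto\sigma(q+\alpha)$ you propose, and the Morrey embedding plays the role of the Sobolev gain in $d=1$. One further small slip: in the $m^{-1}$ part you invoke the $L^\infty$ bound on $m$ ``to dispose of factors that are positive powers of $m$,'' but no such factors appear; the coupling contributes $m^{\alpha-q-2}|Dm|^2$ with $\alpha<r\le q$, so every exponent is negative, and the estimate for $m^{-1}$ is independent of the bound on $m$.
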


  We prove Theorem~\ref{mIsLq-q+q} in Section~\ref{sec3}  by combining the arguments
in the proof of Theorem~\ref{mIsLq}
with the Moser iteration method. We further show in   
 Section~\ref{sec3} that we cannot expect 
Theorem~\ref{mIsLq-q+q} to 
hold for general potentials. In fact, we exhibit in Example~\ref{exam}
     an instance of \eqref{planningPs}  with an unbounded potential, $V$,
for which the   solution $(u,m)$ is such that $m\in C^\infty([0,T]\times\Tt)$  and $m$ attains the zero. Thus, in particular,
  the inverse density function, $m^{-1}$, is unbounded.

 \section{Displacement Convexity for the Planning Problem with a Potential}
\label{sec2}

Here, we explore displacement-convexity properties for Problem \ref{planningP}, which will enable us to prove Theorem \ref{mIsLq}.

Let $(u,m)$ solve Problem \ref{planningP}.  
As shown in \cite{gomes2018displacement}, for certain functions $U:\Rr_0^+\to\Rr$,  the map
\begin{equation}\label{Uconv}
t\mapsto \int_{\mathbb{T}^d}^{}U(m(t,x))\,\dx
\end{equation}
 is convex when $V\equiv0$. The convexity of the  map in \eqref{Uconv} implies that we can control %
\begin{equation*}
\begin{aligned}
\max_{0\leq t\leq T} \int_{\mathbb{T}^d}^{}U(m(t,x))\,\dx
\end{aligned}
\end{equation*}
in terms of its values at \(t=0\) and \(t=T\). In contrast with the case  without potential, that property is, in general,  false for the case with a potential   (see Example \ref{exam}). 

Next, using this  displacement convexity,  we explore   conditions on $V$ and  $U$ under which  the maximum of the map in \eqref{Uconv} on a given interval is controlled by its values at the endpoints of the interval. 

First, we set 
 \begin{equation}\label{p}
 P(z)=zU^\prime(z)-U(z).
 \end{equation}
Accordingly,
\begin{equation*}
\begin{split}
\frac{\d}{\dt}\int_{\mathbb{T}^d}^{}U(m)\,\dx&=\int_{\mathbb{T}^d}^{}U^\prime(m)m_t\,\dx=\int_{\mathbb{T}^d}^{}U^\prime(m)\div (mDu )\,\dx\\&= \int_{\mathbb{T}^d}^{}\big(U^\prime(m)m\Delta u +U^\prime(m)Dm\cdot Du\big) \,\dx\\&=\int_{\mathbb{T}^d}^{}\big(U^\prime(m)m\Delta u +D(U(m)) \cdot Du\big) \,\dx\\&=
\int_{\mathbb{T}^d}^{}\big(U^\prime(m)m\Delta u -U(m)\Delta u\big)\,\dx=\int_{\mathbb{T}^d}^{}P(m)\Delta u\,\dx.
\end{split}
\end{equation*}
Differentiating  the preceding  identity, we get 
\begin{equation}\label{eq1}
\begin{split}
&\frac{\d^2}{\dt^2}\int_{\mathbb{T}^d}^{}U(m)\,\dx=\int_{\mathbb{T}^d}^{}\big(P^\prime(m)m_t\Delta u+P(m)\Delta u_t\big)\,\dx.
\end{split}
\end{equation}
On the other hand, applying $\Delta$  to the first equation in \eqref{planningPs}, we obtain 
\begin{equation*}
\Delta u_t=|D^2u|^2+DuD\Delta u+\Delta V-\div (g^{\prime}(m)Dm ).
\end{equation*}
Using this equality and taking into account the second equation in \eqref{planningPs}, we deduce from   \eqref{eq1} that
\begin{equation*}
\begin{split}
\frac{\d^2}{\dt^2}\int_{\mathbb{T}^d}^{}U(m)\,\dx=\int_{\mathbb{T}^d}^{}\big[&P^\prime(m)(\Delta u)^2 m+P^\prime(m)\Delta u Du\cdot  Dm+P(m)|D^2u|^2\\&+P(m)D\Delta u\cdot  Du+P(m)\Delta V-P(m)\div (g^{\prime}(m)Dm )\big]\,\dx.
\end{split}
\end{equation*}

To estimate the right-hand side of the preceding equality, we  observe that   integrating by parts  yields the identity\begin{align*}
&\int_{\mathbb{T}^d}^{}P(m)D\Delta u\cdot Du\,\dx- \int_{\mathbb{T}^d}^{}P(m)\div (g^{\prime}(m)Dm )\,\dx\\&\quad=-\int_{\mathbb{T}^d}^{}\div(P(m)Du)\Delta u\,\dx +\int_{\Tt^d} P'(m)g'(m) |Dm|^2\,\dx \\&\quad=\int_{\mathbb{T}^d}^{}\big(-P^\prime(m)Dm\cdot Du\Delta u -P(m)(\Delta u)^2 + P'(m)g'(m) |Dm|^2 
\big)\,\dx.
\end{align*}
Moreover, by the  Cauchy-Schwarz inequality, we have 
\begin{equation}\label{eq:C-S}
|D^2u|^2\geq\sum_{i=1}^{d}u_{x_ix_i}^2\geq\frac{1}{d}\bigg(\sum_{i=1}^{d}u_{x_ix_i} \bigg) ^2=\frac{1}{d}(\Delta u)^2.
\end{equation}
Hence, if \(U\) is such that \(P(m) \geq 0\), we deduce that
\begin{equation}\label{dispeq} 
\begin{split}
\frac{\d^2}{\dt^2}\int_{\mathbb{T}^d}^{}U(m)\,\dx\geq\int_{\mathbb{T}^d}^{}\Big[&(\Delta u)^2\left( P^\prime(m)m-P(m)+\tfrac{1}{d}P(m)\right)\\&+P^\prime (m)g^\prime (m)|D m|^2+P(m)\Delta V\Big]\,\dx.
\end{split}
\end{equation}

When $U$ is a power function, 
\begin{equation}\label{U}
U(z)=z^s,\quad s\geq 1,
\end{equation}
 from \eqref{dispeq}  and \eqref{p}, we get
\begin{equation}\label{eq3}
\frac{\d^2}{\dt^2}\int_{\mathbb{T}^d}^{}U(m)\,\dx \geq -|s-1|\Vert\Delta V\Vert_{L^\infty(\Tt^d)} \int_{\mathbb{T}^d}^{}U(m)\,\dx.
\end{equation}
\begin{remark}\label{rems}
        In one-dimensional case, $d=1$, the estimate \eqref{eq:C-S} holds with equality. In particular, we do not need the condition \(P(m)\geq 0\) to get \eqref{dispeq} (which then holds with equality). Hence, a direct computation shows that, in \(d=1\) case, \eqref{eq3} holds  for \(U(z)=z^s\) with  $s \in  \Rr \setminus      (0,1)$.
\end{remark}

Next, we introduce a simplified notation to denote the class
of functions that
satisfy a condition of the  type  \eqref{eq3}. Under such type of condition, the subsequent lemma provides a smallness constraint on  \(V\) under which  the maximum of map in \eqref{Uconv} on a given
interval is controlled by its values at the endpoints of the interval,  as detailed in the proof of  Theorem~\ref{mIsLq}.
\begin{definition}\label{def} Given  $a$, $b$, $c\geq 0$,  we denote by  $\Ff_a^b(c)$  the set of all  functions  $f\in C^2([0,T])$ that are  non-negative and satisfy 
         \begin{equation}\label{eq4}
         \begin{cases} 
         f^{\prime\prime}(t)+cf(t)\geq 0\enspace \text{ for all }    t\in [0,T],   \\ f(0)=a,\enspace f(T)=b.& 
         \end{cases}
         \end{equation}
\end{definition}

\begin{lem}\label{UniBoundin} Suppose that $0<\varepsilon\leq2$ and $a$, $b\geq 0$. Let  $0\leq c\leq\frac{2-\varepsilon}{T^2}$. Then, the family of functions  $\Ff_a^b(c)$ introduced in Definition~\ref{def} is uniformly bounded; more precisely,  for any $f\in \Ff_a^b(c)$, we have
        \begin{equation*}
        0\leq f(t)\leq \frac{2(a+b)}{\varepsilon}\enspace \text{ for all } t\in[0,T].
        \end{equation*}
\end{lem}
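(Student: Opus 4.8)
The plan is to establish the upper bound by an elementary two-step integration starting from a maximum point of $f$ on $[0,T]$; the lower bound $f\ge 0$ is built into the definition of $\Ff_a^b(c)$, so nothing is needed there. Fix $f\in\Ff_a^b(c)$, set $M:=\max_{[0,T]}f\ge 0$, and let $t_0\in[0,T]$ be a point at which this maximum is attained. I would then distinguish two cases according to whether $t_0$ is an endpoint or an interior point of $[0,T]$.

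If $t_0\in\{0,T\}$, then $M=\max\{a,b\}\le a+b$, and since $0<\varepsilon\le 2$ this already gives $M\le\frac{2(a+b)}{\varepsilon}$. The substantial case is $t_0\in(0,T)$, in which $f'(t_0)=0$. Here I would combine the differential inequality in \eqref{eq4} with the bound $f\le M$ to obtain $f''(s)\ge -c\,f(s)\ge -cM$ for every $s\in[0,T]$. Integrating this from $t$ to $t_0$, for $t\in[0,t_0]$, yields $f'(t)=-\int_t^{t_0}f''(s)\,\d s\le cM(t_0-t)$, and integrating once more over $[0,t_0]$ gives $M-a=f(t_0)-f(0)\le\tfrac{1}{2}cM\,t_0^2$, that is,
\[
a\ge M\Big(1-\tfrac{1}{2}c\,t_0^2\Big).
\]

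To close the argument I would invoke the smallness of $c$: since $0<t_0<T$ and $0\le c\le\frac{2-\varepsilon}{T^2}$, we have $\tfrac{1}{2}c\,t_0^2\le\tfrac{1}{2}cT^2\le 1-\tfrac{\varepsilon}{2}$, hence $1-\tfrac{1}{2}c\,t_0^2\ge\tfrac{\varepsilon}{2}>0$; multiplying by $M\ge 0$ and combining with the displayed inequality gives $a\ge\tfrac{\varepsilon}{2}M$, so $M\le\frac{2a}{\varepsilon}\le\frac{2(a+b)}{\varepsilon}$, which settles this case as well.

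The one point requiring care is the strict positivity of the factor $1-\tfrac{1}{2}c\,t_0^2$, which rests on the strict inequality $t_0<T$; this is exactly where the smallness hypothesis on $c$ — equivalently, through Assumption~\ref{assumtionOf_V}, the smallness of $\|\Delta V\|_{L^\infty}$ — enters the interior-maximum case. Conceptually the estimate is a maximum principle for the operator $-\frac{\d^2}{\dt^2}-c$ on $[0,T]$, valid because $c\le\frac{2-\varepsilon}{T^2}<\frac{\pi^2}{T^2}$, the first Dirichlet eigenvalue of $-\frac{\d^2}{\dt^2}$ on $[0,T]$; one could alternatively dominate $f$ by the explicit solution $\phi(t)=\big(a\sin(\sqrt c\,(T-t))+b\sin(\sqrt c\,t)\big)/\sin(\sqrt c\,T)$ of $\phi''+c\phi=0$ with $\phi(0)=a$ and $\phi(T)=b$ and then estimate $\phi$, but the double-integration argument above avoids having to bound $\sin(\sqrt c\,T)$ from below and treats the degenerate case $c=0$ uniformly.
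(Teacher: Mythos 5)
Your proof is correct, and it takes a genuinely different route from the paper's. The paper introduces the auxiliary function $h(t)=f(t)+\tfrac{cM}{2}t^2$ (where $M=\max_{[0,T]}f$), notes that $h''=f''+cM\geq f''+cf\geq 0$ so that $h$ is convex, and then uses the elementary bound $h(t)\leq h(0)+h(T)$ valid for any non-negative convex function on $[0,T]$ to get $M\leq a+b+\tfrac{cM}{2}T^2$ in one stroke, with no case distinction on where the maximum is attained. Your argument instead isolates the interior-maximum case, uses the stationarity $f'(t_0)=0$ there, and integrates the pointwise bound $f''\geq -cM$ twice to obtain $a\geq M\bigl(1-\tfrac12 c\,t_0^2\bigr)$, which is in fact a slightly sharper inequality (it only involves the nearer endpoint value). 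Both arguments close in the same way via $cT^2\leq 2-\varepsilon$, and both are elementary; the paper's convexification trick is marginally cleaner since it avoids the boundary-versus-interior dichotomy, while yours makes explicit the maximum-principle viewpoint and the role of the first Dirichlet eigenvalue $\pi^2/T^2$, as your closing remark observes.

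One small correction to that closing remark: the strict positivity of $1-\tfrac12 c\,t_0^2$ does not rest on the strict inequality $t_0<T$. It already follows from $t_0\leq T$ together with $cT^2\leq 2-\varepsilon$, which give $1-\tfrac12 c\,t_0^2\geq 1-\tfrac12 cT^2\geq \tfrac{\varepsilon}{2}>0$. What the interiority $t_0\in(0,T)$ actually buys you is only the stationarity $f'(t_0)=0$ needed to start the integration; the strict positivity of the remaining factor comes from the assumption $\varepsilon>0$, not from $t_0<T$.
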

\begin{proof}
        Set
         \begin{equation}\label{prop1fmax}
        M=\max\limits_{t\in[0,T]}f(t),
        \end{equation}
        and let
        \begin{equation*}
        h(t)=f(t)+kt^2, \enspace 
        \end{equation*} 
        where \(k=\frac{cM}{2}\). 
        We claim that $h$ is  convex. In fact, by  \eqref{eq4}  and \eqref{prop1fmax}, we have
        \begin{equation*}
        h^{\prime\prime}(t)=f^{\prime\prime}(t)+2k=f^{\prime\prime}(t)+cf(t)+2k-cf(t) =f^{\prime\prime}(t)+cf(t)+c(M - f(t)) \geq0 ,
        \end{equation*}
        which proves the claim. 

By the convexity and non-negativity of $h$, we conclude that, for all \(t\in[0,T]\),
we have        \begin{equation*}
        f(t)\leq h(t)\leq h(0)+h(T)=f(0)+f(T)+k T^2=a+b+\frac{cM}{2}T^2.
        \end{equation*}
        Taking the maximum over \(t\in[0,T]\) in the preceding estimate, we get         \begin{align*}
        &M\leq a+b+\frac{cM}{2}T^2, 
        \end{align*}
from which we deduce that \(M  \leq \tfrac{2(a+b)}{2-cT^2} \leq \tfrac{2(a+b)}{\epsi}\) because   $c\leq\frac{2-\varepsilon}{T^2}$. \end{proof}
\begin{remark}
        The claim in Lemma~\ref{UniBoundin} is false for an arbitrary positive  constant \(c\).  For instance, let $c\geq \frac{\pi^2}{T^2}$ and
\begin{equation*}
        f_k(t)=k \sin \tfrac{\pi t}{T}  +1,\quad k\in \Nn. 
        \end{equation*}
Then, \(f_k\in\Ff_1^1(c)\)
  for all \(k\in\Nn\), which shows that, in this case, the claim
in Lemma~\ref{UniBoundin}  fails for any fixed constant \(\epsi>0\) and $c\geq \frac{\pi^2}{T^2}$. 
\end{remark}
 
\begin{proof}[Proof of Theorem~\ref{mIsLq}] We start by proving
 the  estimate in \eqref{mIsLqeq}.  Let $s=p+1$. Then, from Assumption~\ref{assumtionOf_V}, it follows that there exists $0<\varepsilon< 2$ such that 
        \begin{equation*}
|s-1|\Vert\Delta V\Vert_{L^\infty([0,T]\times\Tt^d)}\leq\frac{2-\varepsilon}{T^2}.
        \end{equation*}
        Applying   Lemma \ref{UniBoundin}  to \(f(t):=\int_{\Tt^d} m^s(x,t)\,\dx\) and taking into account \eqref{U}--\eqref{eq3}, we deduce that 
         \begin{align}
         &\int_{\mathbb{T}^d}^{}m^s\,\dx\leq\frac{2}{\varepsilon}\left(          \int_{\mathbb{T}^d}^{}m_0^s\,\dx+\int_{\mathbb{T}^d}^{}m_T^s\,\dx\right),\label{eq:bddmainthm}
         \end{align}
         which together with the smothness of \(m_0\) and \(m_T\)
         concludes the proof of  \eqref{mIsLqeq}.

The proof of the estimate in \eqref{mIsLqe-q}
is analogous, as we outline next. In this case, we take \(s=-p+1\) and observe that
\(s\leq -1\) for \(p\geq 2\). The conclusion follows by applying Lemma~\ref{UniBoundin}  using Remark~\ref{rems}. Note also that Assumption~\ref{assumtionOnBounds}
guarantees that the right-hand side of \eqref{eq:bddmainthm} is finite whenever
\(s<0\).       
\end{proof}

\section{Further  Estimates }
\label{sec3}
In this section, we consider the mean-field planning problem in Problem~\ref{planningP}
with  \(g(m) = m^\alpha\) for some \(\alpha>0\). In this  case, we  establish $L^\infty$ estimates for  $m$ and $m^{-1}$, as stated in Theorem \ref{mIsLq-q+q}, which we  prove by combining the next two propositions. 

\begin{pro}\label{mIsLinfnity-q} Let $(u,m)$ solve Problem~\ref{planningP}
with  \(g(m) = m^\alpha\) for some \(\alpha>0\) and $d=1$. Suppose that Assumption \ref{assumtionOnBounds} holds and that there
exists  $r\geq 1$ with \(r>\alpha \) for which we can find  a positive constant, $c$, such that
        \begin{equation}\label{m_int_r}
        \max\limits_{t\in[0,T]} \int_{\Tt}^{}\frac{1}{m^{r}}\,\dx<c.
        \end{equation}
        Then, there exists a positive  constant, $C$, depending only on the problem data and on the constant in \eqref{m_int_r}, such that 
        \begin{equation}\label{mIsLinfnity-q-st}
        \max\limits_{t\in[0,T]} \Vert m^{-1}\Vert_{L^{\infty}(\Tt)}\leq C.
        \end{equation}
\end{pro}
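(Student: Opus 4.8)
The plan is to run a Moser iteration directly on the function $w=m^{-1}$, using the assumed $L^r$ bound \eqref{m_int_r} as the base case and bootstrapping to $L^\infty$. First I would derive an evolution inequality for $\int_{\Tt} w^\beta\,\dx$ for a family of exponents $\beta$. Setting $U(z)=z^{-\beta}$ (so $s=-\beta$ in the notation of \eqref{U}, allowed in $d=1$ by Remark~\ref{rems}), the displacement-convexity computation of Section~\ref{sec2} gives, in the one-dimensional case, the identity version of \eqref{dispeq} with $P(z)=-(\beta+1)z^{-\beta}$ and $P'(z)g'(z)=\alpha(\beta+1)\beta\, z^{\alpha-\beta-2}$; the key feature is that the $|Dm|^2$ term now has a \emph{definite sign}. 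Rewriting $|Dm|^2 m^{\alpha-\beta-2}$ in terms of $\big|D(m^{(\alpha-\beta)/2})\big|^2$, this produces, after using $r>\alpha$ so that a suitable negative power of $m$ appears, a differential inequality of the form
\begin{equation*}
\frac{\d^2}{\dt^2}\int_{\Tt} w^\beta\,\dx \;\geq\; c_1\,\beta^2 \int_{\Tt}\big|D(w^{\gamma})\big|^2\,\dx \;-\; c_2\,\beta\,\|\Delta V\|_{L^\infty}\int_{\Tt} w^\beta\,\dx,
\end{equation*}
for an appropriate $\gamma$ comparable to $\beta/2$, where $c_1,c_2>0$ depend only on $\alpha$ and $r$. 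The main obstacle — and the place where $d=1$ and the hypothesis $r>\alpha$ are both essential — is controlling the cross term $P'(m)\Delta u\,Du\cdot Dm$ and the sign bookkeeping so that the $|Dm|^2$ contribution genuinely dominates; in higher dimensions the Cauchy--Schwarz step \eqref{eq:C-S} is lossy and $P(m)\geq0$ fails for negative powers, which is why the statement is restricted to $d=1$.

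Next I would integrate this differential inequality in time. Because the right-hand side is not sign-definite, I cannot simply use convexity; instead I integrate twice over $[0,T]$ against the Green's function of $-\d^2/\dt^2$ on $[0,T]$ with the correct boundary behaviour, exactly as in the proof of Lemma~\ref{UniBoundin}. This yields, for each $\beta$,
\begin{equation*}
\int_0^T\!\!\int_{\Tt}\big|D(w^{\gamma})\big|^2\,\dx\,\dt \;\leq\; C\Big(\beta\,\|\Delta V\|_{L^\infty}\max_{t}\!\int_{\Tt} w^\beta\,\dx + \int_{\Tt} w_0^\beta\,\dx + \int_{\Tt} w_T^\beta\,\dx\Big),
\end{equation*}
and similarly $\max_t\int_{\Tt}w^\beta\,\dx$ is controlled by the same right-hand side (absorbing the $\Delta V$ term when $\beta\|\Delta V\|_{L^\infty}T^2$ is small enough — but since we only iterate finitely many scales at a time and the constant $k_0$ from Assumption~\ref{assumtionOnBounds} makes the boundary integrals finite for every $\beta$, this absorption issue can be handled by not insisting on uniformity in $\beta$ at this stage, or by the standard trick of treating the $\Delta V$ term as lower order via a weighted Young inequality once $\beta$ is large). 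Combining with the one-dimensional Sobolev (or Gagliardo--Nirenberg) inequality $\|w^\gamma\|_{L^\infty(\Tt)}^2 \lesssim \|w^\gamma\|_{H^1(\Tt)}\|w^\gamma\|_{L^1(\Tt)}$, or more simply $\|w^\gamma\|_{L^\infty}\lesssim \|D(w^\gamma)\|_{L^1}+\|w^\gamma\|_{L^1}$, I obtain a recursive bound of the shape
\begin{equation*}
\max_{t}\|w\|_{L^{\beta_{n+1}}(\Tt)} \;\leq\; (C\beta_n)^{1/\beta_n}\,\max_{t}\|w\|_{L^{\beta_n}(\Tt)},
\end{equation*}
with $\beta_{n+1}=\theta\beta_n$ for some fixed $\theta>1$ and $\beta_0=r$.

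Finally, iterating this estimate, the product $\prod_n (C\beta_n)^{1/\beta_n}$ converges because $\sum_n \beta_n^{-1}<\infty$ and $\sum_n \beta_n^{-1}\log\beta_n<\infty$ for a geometric sequence $\beta_n$; together with the base case $\max_t\|w\|_{L^r(\Tt)}\leq c^{1/r}$ from \eqref{m_int_r}, this gives $\max_t\|w\|_{L^\infty(\Tt)}\leq C$, which is precisely \eqref{mIsLinfnity-q-st}. The constant $C$ depends only on $T$, $\alpha$, $r$, $\|\Delta V\|_{L^\infty}$, $k_0$, and the constant in \eqref{m_int_r}, as claimed. The one delicate point to write carefully is the first step: justifying that the distributional manipulations of Section~\ref{sec2} are valid for the negative-power $U(z)=z^{-\beta}$ (legitimate here since $m\in C^\infty$ and, by \eqref{m_int_r} applied pointwise in a bootstrap or by a separate lower-bound argument, $m$ stays bounded away from $0$ — indeed part of what we are proving), and tracking the exact powers of $\beta$ so that the Moser product converges.
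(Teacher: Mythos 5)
Your plan is in the same spirit as the paper's proof: both run a Moser iteration on $m^{-1}$ with $d=1$, both exploit that \eqref{dispeq} holds with equality in one dimension (Remark~\ref{rems}), and both arrive at a recursion whose logarithmic series converges. But the place you flag as the ``delicate point'' is not where the real difficulty lies, and the place where the real difficulty does lie is exactly where your proposal is circular or hand-waving.

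The problematic step is the treatment of the $\Delta V$ contribution. After the 1D identity you reach, for $q$ large,
\begin{equation*}
\frac{\d^2}{\dt^2}\int_{\Tt}\frac{1}{m^q}\,\dx \;\geq\; c_1\, q^2\int_{\Tt}\Big|D\big(m^{-(q-\alpha)/2}\big)\Big|^2\,\dx \;-\; c_2\, q\,\|\Delta V\|_{L^\infty}\int_{\Tt}\frac{1}{m^q}\,\dx.
\end{equation*}
You observe that the last term cannot be absorbed by smallness once $q\|\Delta V\|_{L^\infty}T^2$ is no longer small, and you offer two patches: ``not insisting on uniformity in $\beta$ at this stage'' (this does not help --- Moser iteration needs the constants to grow at most polynomially in the exponent so that the logarithmic series converges; finiteness of each boundary integral alone is useless), and ``a weighted Young inequality once $\beta$ is large'' (this is the right instinct but gives nothing by itself: Young compares $\int m^{-q}$ to a power of itself and cannot produce the gradient term you need on the other side). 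The mechanism that actually makes this work, and which is the heart of the paper's proof, is an \emph{interpolation chain anchored at the base exponent $r$}. One applies the generalized Poincar\'e inequality to $h=m^{-(q-\alpha)/2}$ with the low exponent $a=\ell=2r/(q-\alpha)$ so that the additive term is controlled by $M_r$; then Morrey's embedding gives $\|m^{-1}\|_{L^\infty}\lesssim M_r^{1/r}+f(t)^{1/(q-\alpha)}$; then H\"older interpolation $\|m^{-1}\|_{L^q}\leq\|m^{-1}\|_{L^r}^{\theta}\|m^{-1}\|_{L^\infty}^{1-\theta}$ with $\theta=r/q$ yields
\begin{equation*}
\int_{\Tt}\frac{1}{m^q}\,\dx \;\lesssim\; M_r\Big(M_r^{(q-r)/r}+f(t)^{\gamma}\Big),\qquad \gamma=\frac{q-r}{q-\alpha}\in(0,1),
\end{equation*}
and \emph{only now} does Young's inequality $f^\gamma\leq\varepsilon f+C(\varepsilon)$ (legitimate precisely because $\gamma<1$, which is why $r>\alpha$ is needed) absorb the $\Delta V$ contribution into the coefficient of $f(t)$, leaving a constant lower bound for $\frac{\d^2}{\dt^2}\int m^{-q}$. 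Without this chain the recursion $M_{q_{n+1}}\leq (Cq_{n+1})^{1/(1-\gamma_n)}M_{q_n}^{1/(1-\gamma_n)}$ with explicit polynomial growth in $q_{n+1}$ is unavailable, and the convergence of your product $\prod_n(C\beta_n)^{1/\beta_n}$ is asserted, not earned.

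Two smaller points. First, the paper never passes to a space-time estimate $\int_0^T\int|D(w^\gamma)|^2$; it keeps the argument pointwise in $t$ and closes it with the elementary convexity device of Lemma~\ref{UniBoundin} (add $kt^2$, use convexity and the endpoint data controlled by Assumption~\ref{assumtionOnBounds}). Your route through the Green's function would require extra care to turn a space-time gradient bound back into a $\max_t$ bound on the $L^q$ norm, and as written the displayed inequality for $\max_t\int w^\beta$ has the quantity to be estimated on both sides. Second, the ``delicate point'' you highlight about justifying negative powers of $m$ is not a real obstruction: $m\in C^\infty$ together with \eqref{m_int_r} forces $m(t,\cdot)>0$ pointwise, so $U(z)=z^{-s}$ is smooth along the solution and all the manipulations in Section~\ref{sec2} are classical; this is precisely why Assumption~\ref{assumtionOnBounds} and the hypothesis \eqref{m_int_r} are in place.
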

\begin{proof} 
 To simplify the notation, throughout this proof, we  denote by the same letter \(C\) any positive
constant that depends only on  the
problem data  or on the constant in \eqref{m_int_r} or  on universal constants such as the constant
in the
Sobolev inequality. However, we   keep track of the relevant
power dependencies of these constants.  
Moreover, we may assume without
loss of generality,  that any such constant \(C\)
satisfies \(C\geq1\).
 
For  $s\geq 1$, set 
        \begin{equation}\label{mIsLinfnity-q_assum}
        M_s=\max\limits_{t\in[0,T]} \int_{\Tt}^{}\frac{1}{m^{s}}\,\dx.
        \end{equation}
Fix 
\begin{equation}\label{condFor_q}
q>r+\alpha,
\end{equation}
and let $\ell=\frac{2r}{q-\alpha}$.
By  \eqref{m_int_r},
we have \(M_r<\infty\) and, without
loss of generality, we may assume that
\(M_r\geq 1\).

As we are in one-dimensional case, $d=1$, by Remark \ref{rems} and \eqref{dispeq} with \(s=-q\), we have
        \begin{equation}\label{mIsLinfnity-q_eq1}
\begin{split}
\frac{\d^2}{\dt^2}\int_{\Tt}^{}\frac{1}{m^q}\,\dx &= q(q+1)\int_{\mathbb{T}}^{}\frac{(\Delta
u)^2}{m^{q}}\,\dx+\alpha q(q+1) \int_{\Tt}^{}\frac{|Dm|^2}{m^{q+2-\alpha}}\,\dx-(q+1) \int_{\Tt}^{}\frac{\Delta V}{m^{q}}\,\dx\\
&\geq \alpha q(q+1) \int_{\Tt}^{} \frac{|Dm|^2}{m^{q-\alpha+2}} \,\dx-(q+1)C \int_{\Tt}^{}\frac{1}{m^{q}}\,\dx\\
&=4\alpha \frac{q(q+1)}{(q-\alpha)^2} \int_{\Tt}^{} \left| D\left( \frac{1}{m^{\frac{q-\alpha}{2}}}\right) \right| ^2 \dx-(q+1)C \int_{\Tt}^{}\frac{1}{m^{q}}\,\dx.
\end{split}
        \end{equation}
        
Set 
        \begin{equation*}
f(t)=\int_{\Tt}^{} \left|  D\left( \frac{1}{m^{\frac{q-\alpha}{2}}}\right) \right| ^2 \dx.
        \end{equation*}
        
According to the generalized Poincar{\'e} inequality (see 
        \cite[Proposition~4.10]{GPV}), for any $0<a<2$, there exists a positive constant, $C_a\geq 1$, such that, for any function $h\in W^{1,2}(\Tt)$, we have 
        \begin{equation}\label{PoinGen}
\left( \int_{\Tt}^{}|h|^2\,\dx\right)^\frac{1}{2} \leq C_a\left[ \left( \int_{\Tt}^{}|Dh|^2\,\dx\right)^\frac{1}{2}+\left( \int_{\Tt}^{}|h|^a\,\dx\right)^\frac{1}{a}\right]. 
        \end{equation}
 Taking $h=m^{\frac{-q+\alpha}{2}}$ and $a=\ell=\frac{2r}{q-\alpha}$ in \eqref{PoinGen}, we obtain
\begin{equation}\label{Poincare}
        \int_{\Tt}^{}\frac{1}{m^{q-\alpha}}\,\dx \leq C^2_{\ell}   \left(  f^{\frac{1}{2}}(t)+M_r^{\frac{q-\alpha}{2r}} \right)^2 \leq2C^2_\ell \left (f(t)
+ M_r^{\frac{q-\alpha}{r}}\right).
\end{equation}
On the other hand, by Morrey's embedding  theorem (see \cite[Section 5.6, Theorem 4]{E6}), we have
\begin{equation}
\label{Morrey}
\left\Vert \frac{1}{m^{\frac{q-\alpha}{2}}}\right\Vert_{L^{\infty}(\Tt)}\leq C\left( \int_{\mathbb{T}}^{}\frac{1}{m^{q-\alpha}}\,\dx +f(t) \right)^{\frac{1}{2}}.
\end{equation}
Raising the preceding estimate to the power of \(2/(q-\alpha)\)
first, and then using  \eqref{Poincare}, we deduce that
\begin{equation}\label{Morry+Poin}
\left\Vert \frac{1}{m}\right\Vert_{L^{\infty}(\Tt)}\leq C^{\frac{2}{q-\alpha}} C_\ell^{\frac{2}{q-\alpha}}\left( M_r^{\frac{1}{r}} +(f(t))^{\frac{1}{q-\alpha}} \right). 
\end{equation}

On the other hand, by \eqref{condFor_q}, we have
\begin{equation*}
\theta=\frac{r}{q}\in(0,1).
\end{equation*}
Hence,  H\"older's interpolation inequality yields
\begin{equation}\label{Holder1}
\left\Vert \frac{1}{m}\right\Vert_{L^{q}(\Tt)}\leq\left\Vert \frac{1}{m}\right\Vert^\theta _{L^{r}(\Tt)} \left\Vert \frac{1}{m}\right\Vert^{1-\theta} _{L^{\infty}(\Tt)} \leq M_r^{\frac{\theta}{r}}\left\Vert \frac{1}{m}\right\Vert^{1-\theta}
_{L^{\infty}(\Tt)}.
\end{equation}
Then, setting
\begin{equation}\label{gamma<0}
\gamma=\frac{q(1-\theta)}{q-\alpha}=\frac{q-r}{q-\alpha},
\end{equation}
 we conclude from \eqref{Morry+Poin} and \eqref{Holder1} that
\begin{equation}\label{eq:estLqd=1}
\int_{\Tt}^{}\frac{1}{m^{q}}\,\dx\leq C^{2\gamma+1} C_\ell^{2\gamma}M_r^{\frac{q\theta}{r}}\Big( M_r^{\frac{q(1-\theta)}{r}}+(f(t))^{\gamma}\Big).
\end{equation}
Observing that  \(\frac{q\theta}{r}=1\) and
\(\frac{q(1-\theta)}{r} = \frac{q-r }{r}\),  estimates   \eqref{eq:estLqd=1} and \eqref{mIsLinfnity-q_eq1} yield
\begin{equation}\label{mIsLinfnity-q_eq1'}
\frac{\d^2}{\dt^2} \int_{\Tt}^{}\frac{1}{m^{q}}\,\dx \geq 4\alpha \frac{q(q+1)}{(q-\alpha)^2}f(t) -(q+1)C^{2\gamma+2} C_\ell^{2\gamma}M_r\left( M_r^{\frac{q-r }{r}}+(f(t))^{\gamma}\right). 
\end{equation}

Next, we  estimate the right-hand side of \eqref{mIsLinfnity-q_eq1'} using Young's inequality with $\varepsilon$; namely, the estimate 
\begin{equation}\label{Young}
ab\leq \varepsilon a^\sigma+ C(\varepsilon) b^{\frac\sigma{\sigma-1}} 
\end{equation}
that is valid for all \(a\),
\(b\geq0\), \(\epsi>0\), and \(1<\sigma<\infty\), with $C(\varepsilon)=\frac{\sigma-1}{\sigma}(\varepsilon \sigma)^{-\frac{1}{\sigma-1}}$. Note that \(\gamma\in(0,1)\) because $r>\alpha$ and \(q>\min\{r,\alpha\}\)
by \eqref{condFor_q}.  Then, taking $\sigma=\gamma^{-1}$,   $a=(f(t))^{\gamma}$, $ b=1$, and  
\begin{equation}\label{epsilond>2}
\varepsilon=4\alpha \frac{q}{(q-\alpha)^2}\frac{1}{C^{2\gamma+2} C_\ell^{2\gamma}M_r}.
\end{equation}
in \eqref{Young}, we conclude that
\begin{equation}\label{Youngf}
(f(t))^{\gamma}\leq \varepsilon f(t) +C(\varepsilon),
\end{equation}
where
\begin{equation}\label{Cepsiond>2}
C(\varepsilon)=(1-\gamma)\left( \frac{\gamma}{\varepsilon}\right) ^{\frac{\gamma}{1-\gamma}}= (1-\gamma)\gamma^{\frac{\gamma}{1-\gamma}}\left( \frac{ (q-\alpha)^2{C}^{2\gamma+2}C_\ell^{2\gamma}M_r}{4\alpha q}\right)^{\frac{\gamma}{1-\gamma}}.
\end{equation}
Hence, using \eqref{Youngf} in \eqref{mIsLinfnity-q_eq1'} yields
\begin{equation}\label{mIsLinfnity-q_eq2_1}
\begin{split}
\frac{\d^2}{\dt^2}\int_{\Tt}^{}\frac{1}{m^q}\,\dx \geq -(q+1){C}^{2\gamma+2}C_\ell^{2\gamma} \left( M_r^{\frac{q }{r}}+M_rC(\varepsilon)\right).
\end{split}
\end{equation}
From the conditions \(0<\gamma<1\) and \(q>\alpha>0\), we get the estimates 
\begin{equation}\label{esti_cost1}
(1-\gamma)\gamma^{\frac{\gamma}{1-\gamma}}<1\quad\text{ and }\quad  \frac{(q-\alpha)^2}{4\alpha q}<\frac{q+1}{\alpha}.
\end{equation}
Because \(q+1\leq 2q\), it follows
from \eqref{esti_cost1} and \eqref{Cepsiond>2}  that
\begin{equation*}
\begin{aligned}
C(\epsi) \leq \left(\frac{2q}{\alpha} {C}^{2\gamma+2}C_\ell^{2\gamma}M_r \right)^{\frac{\gamma}{1-\gamma}} \leq  \left(q {C}^{2\gamma+3}C_\ell^{2\gamma}M_r\right)^{\frac{\gamma}{1-\gamma}},
\end{aligned}
\end{equation*}
which combined with \eqref{mIsLinfnity-q_eq2_1} yields
      \begin{equation}\label{mIsLinfnity-q_eq2_11}
\begin{split}
\frac{\d^2}{\dt^2}\int_{\Tt^d}^{}\frac{1}{m^q}\,\dx \geq - q{C}^{2\gamma+3}C_\ell^{2\gamma}M_r^{ \frac{q}{r}} -\left(q{C}^{2\gamma+3}C_\ell^{2\gamma}M_r\right)^\frac{1}{1-\gamma}.
\end{split}
\end{equation}

Further, taking into account that  \(\frac{1}{1-\gamma}=\frac{q-\alpha}{r-\alpha}>\frac{q}{r}>1\) and 
 \(q\), \(C\), \(C_\ell\), \(M_r\geq 1\), we deduce that
\begin{equation*}
\begin{split}
\frac{\d^2}{\dt^2}\int_{\Tt}^{}\frac{1}{m^q}\,\dx \geq -2\left( q{C}^{2\gamma+3}C_\ell^{2\gamma}\right)^{\frac{1}{1-\gamma}}M_r^{\frac{1}{1-\gamma}}.
\end{split}
\end{equation*}
Defining
\begin{equation*}
h(t)=\int_{\Tt}^{}\frac{1}{m^q}\,\dx +\left( q{C}^{2\gamma+3}C_\ell^{2\gamma}\right)^{\frac{1}{1-\gamma}}M_r^{\frac{1}{1-\gamma}}t^2,
\end{equation*}
the preceding estimate gives that \(h\) is a   (non-negative)
convex
function. Thus, \(h(t)\leq h(0) + h(T)\), which, together with
Assumption \ref{assumtionOnBounds}, implies that
\begin{equation*}
\begin{aligned}
\int_{\Tt}^{}\frac{1}{m^q}\,\dx &\leq \int_{\Tt}^{}\frac{1}{m_0^q}\,\dx+\int_{\Tt}^{}\frac{1}{m_T^q}\,\dx+\left( q{C}^{2\gamma+3}C_\ell^{2\gamma}\right)^{\frac{1}{1-\gamma}}M_r^{\frac{1}{1-\gamma}}T^2\\ &\leq 2\left( q{C}^{2\gamma+4}C_\ell^{2\gamma}\right)^{\frac{1}{1-\gamma}}M_r^{\frac{1}{1-\gamma}}.
\end{aligned}
\end{equation*}
Consequently,  recalling the notation introduced
in \eqref{mIsLinfnity-q_assum}, we have 
\begin{equation}\label{eq:m^-1}
M_q\leq \left( q{C}^{2\gamma+5}C_\ell^{2\gamma}\right)^{\frac{1}{1-\gamma}}M_r^{\frac{1}{1-\gamma}}.
\end{equation}
We observe that from \eqref{eq:m^-1}, the arguments above show
that for any \(q\) satisfying \eqref{condFor_q}, there exits a positive constant, \(C_{q,r}\),
depending only on \(r\), \(M_r\), 
\(q\), and  \(C\), such that
\begin{equation}
\label{eq:boundsbeta}
\begin{aligned}
M_q
\leq C_{q,r}.
\end{aligned}
\end{equation}

Set    
\begin{equation}\label{beta_cond}
\beta=\frac{3}{2},
\end{equation}
and, for each \(n\in\Nn\), define $q_n=\beta^n$. Note that $q_n\to\infty$ as \(n\to\infty \) because $\beta >1$. Thus, there exists $n_0\in\Nn$ such that for all $n\geq n_0$, we have 
\begin{equation}\label{eq:onqn}
 q_{n+1}>q_{n}+\alpha,  \quad  q_{n}> \alpha+r.
\end{equation} 
  In view of \eqref{eq:onqn}, we argue as before to conclude that
    \eqref{eq:m^-1} holds with $q=q_{n+1}$ and  $r=q_n$ for all $n\geq n_0$. Thus,
\begin{equation}\label{M_n_seq}
M_{q_{n+1}}\leq  \left( q_{n+1} {C}^{2\gamma_n+5}C_{\ell_n}^{2\gamma_n}\right)^\frac{1}{1-\gamma_n} M_{q_n}^{ \frac{1}{1-\gamma_n}},
\end{equation}
where
\begin{equation}\label{thetad>2_n}
\ell_n=\frac{2q_n}{q_{n+1}-\alpha}\enspace \text{ and }\enspace \gamma_n=\frac{q_{n+1}-q_n}{q_{n+1}-\alpha}.
\end{equation}

Recalling that  $q_n=\beta^n$ and \eqref{beta_cond}, we get
\begin{equation}\label{gamma_lim}
\lim\limits_{n\to\infty} \ell_n = \frac43\enspace \text{ and }\enspace\lim\limits_{n\to\infty}(1-\gamma_n)=\frac{2}{3}.
\end{equation}

From  \eqref{gamma_lim}, \eqref{thetad>2_n}, and \eqref{eq:onqn}, we can find  $N_0\geq n_0$+1, such that  
\begin{equation}\label{some_ineq}
\frac{4}{3}<\ell_n<\frac53 \enspace \text{ and
}\enspace\frac{1}{1-\gamma_n}<2
\end{equation}
for all $n\geq N_0$.
Relying on \eqref{some_ineq}, we may reduce  \eqref{M_n_seq} to
\begin{equation}\label{M_n_seq_red}
M_{q_{n+1}}\leq Cq_{n+1}^2 M_{q_{n}}^{ \frac{1}{1-\gamma_n}},
\quad n\geq N_0. \end{equation}

To complete the proof, it is enough to prove the boundedness of the sequence  $\{ M_{q_{n}}^{\frac{1}{q_{n}}}\}_{n\geq N_0} $.
Taking $q=q_{N_0}$ in \eqref{eq:boundsbeta}, we get
\begin{equation}\label{M_q1_bound}
M_{q_{N_0}}^{\frac{1}{q_{N_0}}}\leq C_{q_{N_0},r}.
\end{equation} 

For \(n\), \(k\in\Nn\), we define
\begin{equation}\label{def_psi}
\varPhi_k^n=\prod_{j=k}^{n}\frac{1}{1-\gamma_{j}}
\enspace \text{ and } \enspace
\varPsi_{n}=\sum_{k=N_0+1}^{n}\varPhi^n_{k},
\end{equation}
with the standard convention that an
empty product  equals 1 and an empty sum
equals 0. Note that
\begin{equation*}
\begin{aligned}
\varPhi_n^n =\frac{1}{1-\gamma_n}, \enspace \frac{1}{1-\gamma_{n+1}}\varPhi_k^n
= \varPhi_k^{n+1}, \enspace  \frac{1}{1-\gamma_{n+1}}(1+\varPsi_{n})=\varPhi_{n+1}^{n+1} +\sum_{k=N_0+1}^{n} \varPhi_k^{n+1} = \varPsi_{n+1}.
\end{aligned}
\end{equation*}
Using these identities, the  recurrence relation in \eqref{M_n_seq_red}, and a mathematical 
induction argument, we obtain
\begin{equation}\label{M_n_seq_M_1}
M_{q_{n+1}}^{\frac{1}{q_{n+1}}}\leq C^{\frac{1}{q_{n+1}}(1+\varPsi_{n})}\left( M_{q_{N_0}}^{ \varPhi^n_{N_0}}\right)^{\frac{1}{q_{n+1}}} \left( q_{n+1}^2 \prod_{k=N_0+1}^{n}q_{k}^{2\varPhi^n_{k}}\right)^{\frac{1}{q_{n+1}}} \end{equation}
for all \(n\geq N_0\).

Next, we  estimate the three multiplicative factors on the right-hand side of \eqref{M_n_seq_M_1} separately.
From \eqref{eq:onqn}, we know that $q_{j}-\alpha>q_{j-1}$ for all $j\geq N_0$.  Consequently, recalling \eqref{thetad>2_n} and the definition
\(q_n = \beta^n\), we have\begin{equation*}
\frac{1}{\beta(1-\gamma_{j})}=1+\frac{\beta\alpha-\alpha}{\beta(\beta^{j}-\alpha)}<1+\frac{\alpha}{\beta^{j}-\alpha}<1+\frac{\alpha}{\beta^{j-1}}.
\end{equation*}
 Thus, for all \(k\geq N_0\), we conclude
that \begin{equation*}
\begin{aligned}
\prod_{j=k}^{n}\frac{1}{\beta(1-\gamma_{j})}&<\prod_{j=k}^{n}\Big( 1+\frac{\alpha}{\beta^{j-1}}\Big) =
\prod_{j=0}^{n-k}\Big(
1+\frac{\alpha}{\beta^{k-1}}\frac1{\beta^j}\Big)
  \\&<\prod_{j=0}^{\infty}\Big( 1+\frac{\alpha}{\beta^{N_0}}\Big(\frac1{\beta}\Big)^j\Big)=\Big( -\frac{\alpha}{\beta^{N_0}};\beta^{-1}\Big),
\end{aligned}
 \end{equation*}
 where  \((a;\mathfrak{q})\) denotes the \(\mathfrak{q}\)-Pochhammer symbol (see, for example, \cite{gasper_rahman_2004}), which is a finite number for all \(a\in\Rr\)
and $\mathfrak{q}\in (0,1)$. So, $\left( -\alpha\beta^{-N_0};\beta^{-1}\right) =\rho<\infty$ because $\beta^{-1}<1$.  Hence, for all \(k\geq N_0\), we have the following estimate for  $\varPhi^n_k$:
 \begin{equation}\label{phi_es}
\varPhi^n_k=\beta^{n-k+1}\prod_{j=k}^{n}\frac{1}{\beta(1-\gamma_{j})}<\rho\beta^{n-k+1}.
 \end{equation}
 From \eqref{phi_es} and \eqref{def_psi}, we deduce that
  \begin{equation}\label{psi_es}
\varPsi_{n}=\sum_{k=N_0+1}^{n}\varPhi^n_{k}<\rho\sum_{k=N_0+1}^{n}\beta^{n-k+1} =\rho\,\frac{\beta(\beta^{n-N_0}-1)}{\beta-1}.
 \end{equation}
 
Consequently, recalling \eqref{beta_cond}, for all \(n\geq N_0\),
we have
\begin{equation*}
\begin{aligned}
\frac{\varPhi^n_{N_0}}{q_{n+1}}=\frac{\varPhi^n_{N_0}}{\beta^{n+1}}<\rho\beta^{-N_0}<\rho
\enspace \text{ and }\enspace \frac{\varPsi_{n}}{q_{n+1}}=\frac{\varPsi_{n}}{\beta^{n+1}}<\frac{\rho}{\beta-1}\beta^{-N_0}\big(1-\beta^{N_0-n})<2\rho,
\end{aligned}
\end{equation*}
from which we obtain the following estimates for the first two multiplicative factors on the right-hand
side of \eqref{M_n_seq_M_1}:  
 \begin{equation}\label{two_es}
C^{\frac{1}{q_{n+1}}(1+\varPsi_{n})}<C^{\frac{1}{q_{n+1}}+2\rho}
\enspace \text{ and
}\enspace \Big( M_{q_{N_0}}^{ \varPhi^n_{N_0}}\Big)^{\frac{1}{q_{n+1}}} <M_{q_{N_0}}^{ \rho}.
 \end{equation}
 
To estimate the third multiplicative factor on the right-hand side of \eqref{M_n_seq_M_1}, we observe that    \eqref{phi_es}, together with the condition \(\beta>1\),  
implies that
\begin{equation*}
\begin{aligned}
q_{n+1}^2 \prod_{k=N_0+1}^{n}q_{k}^{2\varPhi^n_{k}} = \beta^{2(n+1)}
\beta^{ \sum_{k=N_0+1}^{n} 2k\varPhi^n_{k}} &< \beta^{2(n+1)(\rho+1)}
\beta^{\sum_{k=N_0+1}^{n} 2k(\rho+1)\beta^{n-k+1}} \\ &= 
\beta^{2(\rho+1)\sum_{j=0}^{n-N_0}(n+1-j)\beta^j}.
\end{aligned}
\end{equation*}
Then, because 
 \begin{equation*}
 \begin{aligned}
\sum_{j=0}^{n-N_0}(n+1-j)\beta^j & =\frac{1}{(\beta-1)^2}\left(
\beta(N_0 + 1)\beta^{n-N_0+1}-N_0\beta^{n-N_0+1}-(n+1)(\beta-1) -\beta\right)\\
&< \frac{\beta(N_0 + 1)}{(\beta-1)^2}\beta^{n+1},
\end{aligned}
 \end{equation*}
 we conclude that
  \begin{equation}\label{pro_es_qn}
 \left( q_{n+1}^2 \prod_{k=N_0+1}^{n}q_{k}^{2\varPhi^n_{k}(\beta)}\right)^\frac{1}{\beta^{n+1}} <\beta^{2(\rho+1)\frac{\beta(N_0 + 1)}{(\beta-1)^2}}.
 \end{equation} 
 
Finally, from \eqref{M_q1_bound},   \eqref{M_n_seq_M_1},    \eqref{two_es}, and \eqref{pro_es_qn}, we conclude that   $\{ M_{q_{n}}^{\frac{1}{q_{n}}}\}_{n\geq N_0} $ is a bounded sequence.
 \end{proof}

The next theorem gives the uniform boundedness of the density function, $m$.
\begin{pro}\label{mIsLinfnity+q} Let $(u,m)$ solve Problem~\ref{planningP}
with  \(g(m) = m^\alpha\) for some \(\alpha>0\). Then, 
there exists a positive constant, $C$,  depending only on the problem data, such that
        \begin{equation*}
        \max\limits_{t\in[0,T]} \Vert m\Vert_{L^{\infty}(\Tt^d)}\leq C.
        \end{equation*}
\end{pro}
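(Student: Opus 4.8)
The strategy is a Moser-type iteration on the quantities $M_q:=\max_{t\in[0,T]}\int_{\Tt^d}m^q\,\dx$, where at each level the extra gradient term produced by the congestion coupling $g(m)=m^\alpha$ in the displacement‑convexity identity compensates the loss coming from the potential, with mass conservation serving as the seed. \emph{Step 1 (displacement‑convexity inequality).} Specializing \eqref{dispeq} to $U(z)=z^s$ with $s\ge1$ and $g(m)=m^\alpha$, one has, recalling \eqref{p}, $P(m)=(s-1)m^s\ge0$, a nonnegative coefficient $P'(m)m-P(m)+\tfrac1dP(m)=(s-1)(s-1+\tfrac1d)m^s$ of $(\Delta u)^2$, and $P'(m)g'(m)|Dm|^2=\tfrac{4\alpha s(s-1)}{(s+\alpha)^2}\big|D\big(m^{(s+\alpha)/2}\big)\big|^2$; discarding the $(\Delta u)^2$ term and bounding $P(m)\Delta V\ge-(s-1)\|\Delta V\|_{L^\infty}m^s$ yields
\begin{equation*}
\frac{\d^2}{\dt^2}\int_{\Tt^d}m^s\,\dx\ \ge\ \frac{4\alpha s(s-1)}{(s+\alpha)^2}\int_{\Tt^d}\big|D\big(m^{(s+\alpha)/2}\big)\big|^2\,\dx-(s-1)\|\Delta V\|_{L^\infty}\int_{\Tt^d}m^s\,\dx.
\end{equation*}

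\emph{Step 2 (one iteration step).} Fix $r\ge1$ with $M_r<\infty$ and $q>\max\{r,\alpha\}$; set $w=m^{(q+\alpha)/2}$ and $f(t)=\int_{\Tt^d}|Dw|^2\,\dx$. Since $\tfrac{2q}{q+\alpha}\in(1,2)$ stays below the $W^{1,2}(\Tt^d)$ Sobolev exponent for every $q$ and every $d$, the Gagliardo--Nirenberg inequality interpolating $L^{2q/(q+\alpha)}$ between $W^{1,2}$ and $L^{2r/(q+\alpha)}$, together with $\|w\|_{L^{2r/(q+\alpha)}}^{2r/(q+\alpha)}=\int_{\Tt^d}m^r\,\dx\le M_r$, gives $\int_{\Tt^d}m^q\,\dx\le C\big(f(t)^{\gamma}M_r^{a}+M_r^{q/r}\big)$ for explicit $\gamma=\gamma(q,r,\alpha,d)\in(0,1)$, $a>0$, and $C=C(d,\alpha)$. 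Inserting this into the inequality of Step 1 and using Young's inequality \eqref{Young} to absorb the two $f^{\gamma}$‑contributions into $\tfrac{4\alpha q(q-1)}{(q+\alpha)^2}f$ leaves $\frac{\d^2}{\dt^2}\int_{\Tt^d}m^q\,\dx\ge-K_{q,r}$, where $K_{q,r}$ is finite and bounded by $(\text{a polynomial in }q)\,(1+\|\Delta V\|_{L^\infty})^{\mu(q,r)}M_r^{q/r}$, with $\mu(q,r)$ staying bounded along the iteration (the exact analogue of the bookkeeping in the proof of Proposition \ref{mIsLinfnity-q}). Then $h(t):=\int_{\Tt^d}m^q\,\dx+\tfrac{K_{q,r}}{2}t^2$ is nonnegative and convex, so $h(t)\le h(0)+h(T)$ gives $M_q\le\int_{\Tt^d}m_0^q\,\dx+\int_{\Tt^d}m_T^q\,\dx+\tfrac{T^2}{2}K_{q,r}<\infty$.

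\emph{Step 3 (seed and iteration).} By mass conservation (Remark \ref{rmk:p=0}), $\int_{\Tt^d}m\,\dx\equiv1$, so $M_1=1$ and, by Jensen, $M_q\ge1$ for $q\ge1$; one application of Step 2 with $r=1$ and a fixed $q_0>\max\{1,\alpha\}$ gives $M_{q_0}<\infty$. Fix $\beta\in(1,2)$, put $q_n=\beta^nq_0$, and iterate Step 2 with $q=q_{n+1}$, $r=q_n$. Writing $N_q:=M_q^{1/q}$, bounding $\int_{\Tt^d}m_0^q\,\dx+\int_{\Tt^d}m_T^q\,\dx\le2(\|m_0\|_{L^\infty}\vee\|m_T\|_{L^\infty})^q$, and using $a+b\le2\max\{a,b\}$ to keep the recursion multiplicative under $q_{n+1}$‑th roots, one reaches a recursion of the form $N_{q_{n+1}}\le\max\{C',\,A_nN_{q_n}\}$ with $C'$ depending only on the problem data, $A_n\to1$, and $\prod_n\max\{A_n,1\}<\infty$. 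As in the proof of Proposition \ref{mIsLinfnity-q} (via the $\mathfrak q$-Pochhammer estimates), this forces $\sup_nN_{q_n}<\infty$, and since $\|m(t,\cdot)\|_{L^\infty(\Tt^d)}=\lim_{n\to\infty}\|m(t,\cdot)\|_{L^{q_n}(\Tt^d)}\le\sup_nN_{q_n}$ uniformly in $t$, the proposition follows.

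\emph{Main obstacle.} The delicate part is the accounting in Steps 2--3: one must carry every power of $q$ and of $M_r$ explicitly through the Young step, check that the composite Young exponents and the products of the $(1-\gamma_j)^{-1}$ (divided by $q_{n+1}$) stay bounded, and, crucially, fold the initial/terminal integrals into the multiplicative side of the recursion rather than leaving them additive. This is precisely the computation already carried out for Proposition \ref{mIsLinfnity-q}, transferred here with Morrey's embedding (used there in $d=1$) replaced by the Gagliardo--Nirenberg inequality (valid here in every $d$, since the relevant exponent $\tfrac{2q}{q+\alpha}$ is always below $2$) and with the arithmetic of the exponents adjusted. Note that no smallness hypothesis on $V$ is needed: the factor $(1+\|\Delta V\|_{L^\infty})^{\mu(q,r)}$ enters only multiplicatively and, raised to the power $1/q_{n+1}$, tends to $1$ along the iteration.
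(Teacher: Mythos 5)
Your proposal follows essentially the same route as the paper: specialize the displacement-convexity identity \eqref{dispeq} to $U(z)=z^q$ with $g(m)=m^\alpha$, use mass conservation as the seed $r=1$, control $\int m^q$ by interpolating between the low norm and a Sobolev-type norm of $m^{(q+\alpha)/2}$, absorb the resulting $f^\gamma$-term via Young's inequality into the positive gradient term coming from the congestion coupling, and then run a Moser iteration $q_{n+1}=\beta q_n$, checking that the exponents $1/(1-\gamma_n)$, divided by $q_{n+1}$, remain summable. The only departures are cosmetic: you invoke Gagliardo--Nirenberg directly as a single interpolation step, where the paper splits the same estimate into a generalized Poincar\'e inequality, a Sobolev (resp.\ Morrey for $d=1$) embedding, and a H\"older interpolation, treating the cases $d=1$, $d=2$, $d>2$ separately; and you phrase the iteration in terms of $N_q=M_q^{1/q}$ with a max-form recursion, whereas the paper carries the multiplicative $q$-Pochhammer bookkeeping on $M_q$ itself --- both organize the same computation. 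Your observation that no smallness of $V$ is needed here (in contrast with Theorem~\ref{mIsLq}) is also correctly identified.

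One small imprecision: you speak of absorbing ``the two $f^\gamma$-contributions,'' but the estimate $\int m^q\le C(f^\gamma M_r^a+M_r^{q/r})$ produces only a single $f^\gamma$ term (multiplied by the $\|\Delta V\|_{L^\infty}$-coefficient) to be handled by Young's inequality. This does not affect the argument. It would also be worth stating explicitly that the Gagliardo--Nirenberg inequality you use on the torus is the inhomogeneous form $\|w\|_{L^p}\le C\big(\|Dw\|_{L^2}^\theta\|w\|_{L^{p_0}}^{1-\theta}+\|w\|_{L^{p_0}}\big)$, so that no separate control of $\|w\|_{L^2}$ is required --- this is exactly the role that the generalized Poincar\'e inequality \eqref{PoinGen} plays in the paper.
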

\begin{proof} Proposition~\ref{mIsLinfnity+q} can be proved 
with similar arguments to those in the previous proof; thus, 
we only highlight the main differences.

First, we observe that, by the mass-conservation property (see
Remark~\ref{rmk:p=0}), we have 
        \begin{equation*}
\max\limits_{t\in[0,T]} \int_{\Tt^d}^{}m\,\dx= 1,
\end{equation*}
which gives the analogue to \eqref{m_int_r} with \(r=1\) for any \(d\in\Nn\). 

In one-dimensional case, \(d=1\), the condition \(r>\alpha\) was used only  to guarantee that the value of \(\gamma\) in \eqref{gamma<0} satisfies  \(\gamma<1\). Arguing as in the previous proof adapted
to the present case, we have
\begin{equation*}
\gamma=(1-\theta)\frac{q}{q+\alpha},
\end{equation*}
which satisfies the condition   \(\gamma<1\) for  all $q>1$. 

The proof of the case $d>2$ is also similar to the proof of  Proposition \ref{mIsLinfnity-q}. However, the proof is slightly different. Thus, although we omit the details, we outline next what needs to be changed. 

Regarding the $d>2$ case, we first observe that the \(d\)-dimensional version of \eqref{mIsLinfnity-q_eq1} holds
with $m^q$ in place of \(m^{-q}\) and the \(d\)-dimensional version of \eqref{Poincare} holds with \(m^{q+\alpha}\) in place of \(m^{-q+\alpha}\). In contrast with the $d=1$ case, 
we  use Sobolev's inequality instead of Morrey's embedding  theorem. So, by the  Sobolev inequality (see \cite[Theorem~6
in Section~5.6]{E6}), in place of \eqref{Morrey}, we have
\begin{equation*}
\left\Vert m^{\frac{q+\alpha}{2}}\right\Vert_{L^{2^*}(\Tt^d)}\leq {C}\left( \int_{\mathbb{T}^d}^{}m^{q+\alpha}\,\dx +\int_{\Tt^d}^{} \left|  D\left( m^{\frac{q+\alpha}{2}}\right) \right| ^2 \dx \right)^{\frac{1}{2}}. 
\end{equation*}
Set
 \begin{equation*}
\bar{\theta}=\frac{2q+\alpha d}{q(d(q+\alpha)-d+2)}.
\end{equation*}
It can be checked that $\bar{\theta}\in(0,1)$ and 
\begin{equation*}
\frac{1}{q}=\bar{\theta}+\frac{(1-\bar{\theta})(d-2)}{d(q+\alpha)}=\bar{\theta}+\frac{1-\bar{\theta}}{^{\frac{2^*}{2}(q+\alpha)}}.
\end{equation*}
Thus, by H\"older's  inequality, instead of \eqref{Holder1}, we get
\begin{equation*}
\left\Vert m\right\Vert_{L^{q}(\Tt^d)}\leq\left\Vert m\right\Vert^{\bar{\theta}} _{L^{1}(\Tt^d)} \left\Vert m\right\Vert^{1-\bar{\theta}} _{L^{\frac{2^*}{2}(q+\alpha)}(\Tt^d)}= \left\Vert m\right\Vert^{1-\bar{\theta}} _{L^{\frac{2^*}{2}(q+\alpha)}(\Tt^d)}.
\end{equation*}
Finally, here, we  use
\begin{equation*}
\bar{\gamma}=(1-\bar{\theta})\frac{q}{q+\alpha}
\end{equation*}
in place of \(\gamma\) in \eqref{gamma<0}.
Note that \(\bar0<\gamma<1\)  for  all $q>1$. The remaining  of the proof mimics that of Proposition \ref{mIsLinfnity-q}.

The  $d=2$ case is similar to the  $d>2$
case, using the fact that for any $1<a<2$,  the Sobolev inequality (see \cite[Section 5.6, Theorem 6]{E6}) yields\begin{equation*}
\left\Vert m^{\frac{q+\alpha}{2}}\right\Vert_{L^{a^*}(\Tt^2)}\leq C\left( \int_{\mathbb{T}^2}^{}m^{\frac{a(q+\alpha)}{2}}\,\dx +\int_{\Tt^2}^{} \left| D\left( m^{\frac{q+\alpha}{2}}\right) \right| ^a\,\dx \right)^{\frac{1}{a}}.
\end{equation*}
Then, by Young's inequality,  we obtain
\begin{equation*}\label{Sobolev_d=2}
\left\Vert m^{\frac{q+\alpha}{2}}\right\Vert_{L^{a^*}(\Tt^2)}\leq C\left( \int_{\mathbb{T}^2}^{}m^{q+\alpha}\,\dx +\int_{\Tt^2}^{} \left| D\left( m^{\frac{q+\alpha}{2}}\right) \right| ^2\,\dx\right)^{\frac{1}{a}}.\qedhere
\end{equation*}

\end{proof}

\begin{proof}[Proof of Theorem \ref{mIsLq-q+q}] 
The proof   follows from Propositions~\ref{mIsLinfnity-q}
and \ref{mIsLinfnity+q} and  Theorem \ref{mIsLq}.
\end{proof}

As we mentioned in the Introduction,
 we cannot expect, in general, bounds for \(m^{-1}\)
without a smallness condition on  \(V\),  as in Assumption~\ref{assumtionOf_V}. Next, we give an instance of  \eqref{planningPs} with an unbounded  potential that does not satisfy the conditions of Proposition \ref{mIsLinfnity-q}. In this particular case, we show that the density function, $m$,  has zero values. Hence,  the estimate \eqref{mIsLinfnity-q-st} does not hold without further conditions on \(V\).
\begin{example}\label{exam}Let 
        \begin{equation}\label{examp_m}
m(t,x)=1+\sin (2\pi x) \sin (2\pi t).
        \end{equation}
Notice that the function $m$ defined by \eqref{examp_m} is a probability density function and has two zeros, $(\frac{1}{4},\frac{3}{4})$ and $(\frac{3}{4},\frac{1}{4})$. Plugging     \eqref{examp_m} into the second equation in \eqref{planningPs} with $\alpha=1$  and solving it for $u$, we get
\begin{equation}\label{examp_u}
u(t,x)=-\frac{1}{2\pi}\cot (2 \pi t) \log(1 + \sin (2 \pi t) \sin(2 \pi  x)). 
\end{equation}
Set 
\begin{equation}\label{examp_V}
V(t,x)=m(x,t)+u_t(x,t)-\frac{u_x^2(x,t)}{2}.
\end{equation}
\end{example}
The functions, $m$ and $u$, given by \eqref{examp_m} and \eqref{examp_u}, respectively,  solve the mean-field planning problem 
\begin{equation*}
\begin{cases} 
-u_t+\tfrac{u_x^2}{2}+V(t,x)=m& \\ m_t-(u_x m)_x=0 & \\
m(0,x)=m(T,x)=1,&
\end{cases}
\end{equation*}
with $V$ given by \eqref{examp_V}. Direct computations  show that $\Delta V$ is unbounded, which means the function $V$ does not satisfy the conditions of Proposition \ref{mIsLinfnity-q}.

\bibliographystyle{plain}

\def\cprime{$'$}

\end{document}